\documentclass[10pt,a4paper]{article}
\usepackage[latin1]{inputenc}
\usepackage{anysize}
\usepackage{graphicx}
\marginsize{2.5cm}{2.5cm}{2cm}{1cm}
\usepackage{amssymb,amsmath,amscd,amsfonts,amsthm,mathrsfs}
\setlength{\parindent}{0pt}
\newcommand{\bb}[1]{\mathbb{#1}}
\newcommand{\cc}[1]{\mathcal{#1}}

\numberwithin{equation}{section}
\newtheorem{theorem}{Theorem}
\usepackage{url}
\begin{document}
\title{An $n$-in-a-row Game}
\author{Joshua Erde \thanks{DPMMS, University of Cambridge}}
\maketitle
\begin{abstract}
The usual $n$-in-a-row game is a positional game in which two player alternately claim points in $\bb{Z}^2$ with the winner being the first player to claim $n$ consecutive points in a line. We consider a variant of the game, suggested by Croft, where the number of points claimed increases by 1 each turn, and so on turn $t$ a player claims $t$ points. Croft asked how long it takes to win this game. We show that, perhaps surprisingly, the time needed to win this game is $(1-o(1))n$. 
\end{abstract}
\section{Introduction}
A \emph{positional game} is a pair $(X,\cc{F})$ where $X$ is a set and $\cc{F} \subset \bb{P}(X)$. We call $X$ the \emph{board}, and the members $F \in \cc{F}$ are \emph{winning sets}. The game is played by two players, Red and Blue, who alternately claim unclaimed points from the board. The first player to claim all points from a winning set wins. The \emph{$n$-in-a-row game} is a positional game played on $\bb{Z}^2$ where the winning sets are any $n$ consecutive points in a row, either horizontally, vertically or diagonally.  It is known \cite{B2008} that for $n \leq 4$ the $n$-in-a-row game is a first player win, and a draw for $n \geq 8$. It is believed that for $n=5$ the game is a first player win, and a draw for $n \geq 6$. \\
\ \\
In this note we consider a related game. Given a function $f:\bb{N} \rightarrow \bb{N}$ we define the \emph{$(n,f)$ game} to be a positional game played on the same board with the same winning sets as $n$-in-a-row, however now at time $t$ a player claims $f(t)$ points. The $n$-in-a-row game corresponds to the $(n,1)$ game, where $1$ is the constant function taking value $1$. In this note we will consider the $(n,\iota)$ game, where $\iota$ is the identity function. Unlike the $n$-in-a-row game the $(n,\iota)$ game is not (with perfect play) a draw, since at time $n$ some player will claim $n$ points and so can claim a winning set. Therefore either the first or second player must have a winning strategy. Croft \cite{C2011} asked the question, how long does it take for that player to win?\\
\ \\
Our aim is to show that neither player can win in time less than $(1-o(1))n$. In fact, we prove a stronger result by considering the Maker-Breaker version of the game. A \emph{Maker-Breaker game} is a pair $(X,\cc{F})$ where $X$ is a set and $\cc{F} \subset \bb{P}(X)$, and as before we call $X$ the board and the members $F \in \cc{F}$ winning sets. Two player, Maker and Breaker, alternately claim unclaimed points from the board, Maker colouring his points red and Breaker blue. If Maker is able to claim all points from a winning set he wins, otherwise Breaker wins. For more on Maker-Breaker games see Beck \cite{B2008}. The \emph{Maker-Breaker $(n,f)$ game} is played on the same board with the same winning sets as the $(n,f)$ game. It is obvious that the Maker-Breaker $(n,\iota)$ game is a Maker win. We will consider the question of how long it takes for Maker to win. More formally given a strategy $\Phi$ for Breaker and a strategy $\Psi$ for Maker, at some time $ T(\Phi,\Psi)_n$ Maker will first fully occupy a winning set. Note that trivially $ 2 \sqrt{n} \leq T(\Phi,\Psi)_n \leq n$. We let $T_n = \max_{\Phi} \min_{\Psi} T(\Phi,\Psi)_n$, that is, $T_n$ is the first time at which, with perfect play, Maker is guaranteed to have won.\\
\ \\
 In Section $2$ we provide a simple proof for a linear lower bound on $T_n$ which motivates the ideas for Section $3$ where we show that, perhaps surprisingly, Breaker has a strategy that gives $T_n \geq (1-o(1))n$. Of course, this strategy gives similar lower bounds for the ordinary (non Maker-Breaker) $(n,\iota)$ game when adopted by either player.

\section{A weak pairing strategy}
It is possible to show that the Maker-Breaker $(n,1)$ game is a Breaker win for $n \geq 8$ by utilising a pairing strategy. A \emph{pairing strategy} for a Maker-Breaker game $(X,\cc{F})$ is to find a family of distinct pairs $\{a_1,b_1\},\{a_2,b_2\}, \ldots \in X$, with $a_i \neq b_i$ for all $i$, such that any $F \in \cc{F}$ contains some pair $\{a_i,b_i\}$. Breaker's strategy is then: whenever Maker claims $a_i$ he claims $b_i$ and vice versa. For background on pairing strategies see Beck \cite{B2008}. \\
\ \\
A direct pairing strategy cannot be described for the Maker-Breaker $(n,\iota)$ game, since players claim more than one point at once. However in this section we are able utilise a similar idea to give a lower bound for $T_n$. Instead of pairing points, our plan is to assign to each point a direction, and have Breaker's strategy to be as follows: for each point that Maker claims, Breaker claims the next unclaimed point in that direction. If Maker wants to fully occupy a line, say from East to West, then he cannot claim too many points in it that have been assigned the directions East or West, or Breaker will claim a point inside the line. So we aim to find a way to assign directions to points such that each winning set will have approximately the right number of each direction in it.
\begin{theorem}
$T_n \geq \frac{2}{11}n - 6$.
\end{theorem}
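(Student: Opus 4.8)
The plan is to produce a strategy for Breaker under which Maker cannot complete a winning set before time $\tfrac{2}{11}n-6$. Fix once and for all an assignment of one of the eight directions $\{E,W,N,S,NE,SW,NW,SE\}$ to every point of $\bb{Z}^2$; call $E,W$ the \emph{horizontal} orientations, $N,S$ the \emph{vertical} ones, and the remaining two pairs the two \emph{diagonal} pairs, so that each of the four line-types used as winning sets has an associated pair of opposite orientations. Breaker's strategy is the ``follow the arrow'' rule: on his turn he processes Maker's $t$ new points one by one, and for a point $p$ carrying direction $d$ he claims the first currently-unclaimed point of $\bb{Z}^2$ lying in direction $d$ from $p$; any moves left in the turn he spends on arbitrary unclaimed points (which can only help him). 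Since the board is infinite this is always legal and uses exactly Breaker's quota.

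First I would isolate the blocking mechanism. Suppose that against this strategy Maker first completes a winning set at time $T:=T_n$, and say it is a horizontal line $L=(x_1,\dots,x_n)$ listed from West to East. If $x_i\in L$ carries $E$ and Maker claims $x_i$ on turn $s$, then $x_{i+1},\dots,x_n$ are all Maker's by the end of turn $s$: otherwise Breaker's reply to $x_i$ is a point of $L$ and $L$ is blocked forever, contradicting that Maker completes it (the within-turn order in which Breaker processes his replies is irrelevant here, since no point of $L$ is ever Breaker's). Symmetrically, if $x_j$ carries $W$ and is claimed on turn $s'$, then $x_1,\dots,x_{j-1}$ are all Maker's by the end of turn $s'$. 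Hence the \emph{deadlock}: if $x_i$ is an $E$-point, $x_j$ a $W$-point and $i<j$, then claiming $x_i$ forces $x_j$ to have been taken by turn $s$, while claiming $x_j$ forces $x_i$ to have been taken by turn $s'$; so $s=s'$, that is, $x_i$ and $x_j$ are claimed on \emph{the same turn}.

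Next I would convert this into a bound, assuming the assignment has two properties: (i) the horizontal orientations alternate $E,W,E,W,\dots$ along the horizontally-oriented points of every row (and analogously in the other three directions), and (ii) every horizontal winning set contains at least $k:=\tfrac{2}{11}n-4$ horizontally-oriented points (and analogously). Let $p_1<\dots<p_k$ be the positions of horizontally-oriented points of the completed line $L$. The ``same turn'' relation is an equivalence relation, and by the deadlock it identifies $p_i$ with $p_j$ (for $i<j$) whenever $p_i$ is $E$-oriented and $p_j$ is $W$-oriented; since by (i) the orientations alternate along $p_1,p_2,\dots$, a short check on this relation — using the long-range identifications, not just those of neighbours — shows that all but at most two of $p_1,\dots,p_k$ lie in a single class. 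So at least $k-2$ points of $L$ are claimed on one turn $\tau$, and as Maker claims only $\tau$ points on turn $\tau$ this forces $\tau\ge k-2$, whence $T_n=T\ge\tau\ge k-2=\tfrac{2}{11}n-6$. Vertical and diagonal completed sets are handled identically using (i), (ii) in the matching direction.

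What is left — and where I expect essentially all of the work to be — is the construction of a single direction assignment satisfying (i) and (ii) simultaneously in all four directions. I would look for a periodic assignment; the cleanest attempt is to let the direction of $(x,y)$ depend only on $\alpha x+\beta y \pmod p$ for a prime $p$ and constants with $\alpha,\beta,\alpha+\beta,\alpha-\beta$ all nonzero mod $p$, since then along every line of each of the four types the values of this form are visited in a fixed cyclic order (with steps $\alpha$, $\beta$, $\alpha+\beta$, $\alpha-\beta$ respectively), reducing everything to choosing a single map $\bb{Z}_p\to\{\text{8 directions}\}$ for which, in each of these four cyclic readings, the orientations matching that reading are both frequent and alternating. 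The competition between the four directions for the $p$ residues is what drives the guaranteed density down from the naive $\tfrac14$ towards $\tfrac{2}{11}$, and there is a genuine parity obstruction to perfect alternation — the four orientation-classes cannot all have even size when $p$ is odd — which must be worked around and accounts for the additive loss. Exhibiting such an assignment and verifying its counts is the main obstacle; once it is in place, the paragraphs above finish the proof.
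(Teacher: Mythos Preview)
Your overall approach is the same as the paper's: Breaker follows the arrow attached to each of Maker's points, and the analysis hinges on the fact that an $E$-point at position $i$ and a $W$-point at position $j>i$ in the eventually-completed line must be claimed on the same Maker turn. Your deadlock lemma is correct and is a clean reformulation of what the paper does.

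The genuine gap is your property~(i). Strict alternation of $E,W$ along every row (and analogously in the other three directions) is not what the paper uses and is in fact incompatible with the periodic scheme you propose: for an assignment depending only on $\alpha x+\beta y\pmod p$, each of the four orientation-classes must have even size for the alternation to persist across periods, yet their sizes sum to the odd prime $p$. You flag this parity obstruction, but your counting step (``all but at most two of $p_1,\dots,p_k$ lie in a single class'') is stated under strict alternation, and you do not show how the argument survives once alternation is broken.

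The paper sidesteps this entirely. It writes down an explicit period-$11$ labelling (the row $N,NE,E,SE,S,SW,W,NW,N,NE,E$, shifted by $3$ from row to row) and uses only the much weaker property that every direction occurs at least once in any $11$ consecutive collinear points. From this it argues directly: if Maker has three $E$-points of $L$ claimed before the last turn, they span more than $11$ positions, so a $W$-point sits strictly between the outer two, and your deadlock then forces the entire line to be already complete --- a contradiction. Hence at most two $E$-points and two $W$-points of $L$ are claimed before the final turn, and the remaining $\geq \tfrac{2}{11}n-6$ must all be taken on that last turn. In your language: the exceptions to the single large equivalence class are exactly the leading $W$'s and trailing $E$'s among $p_1,\dots,p_k$, and the ``once per period'' property bounds each of those runs by $2$, with no alternation needed. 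Replacing (i) by this bounded-run condition both removes the parity obstruction and makes the construction immediate.
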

\begin{proof}
We define a function $f:\bb{Z}^2 \rightarrow \{\text{N, NE, E, SE, S, SW, W, NW}\}$ such that:
\begin{itemize}
\item the points $(0,0)$ to $(10,0)$ are mapped to N, NE, E, SE, S, SW, W, NW, N, NE, and E respectively,
\item $f(x,y) = f(x+11,y)$ for all $x,y \in \bb{Z}$,
\item $f(x,y+1) = f(x-3,y)$ for all $x,y \in \bb{Z}$.
\end{itemize}  
So $f$ is periodic with period 11 on $\{(x,y)\,:\, x \in \bb{Z}\}$ for any $y$, and we shift the pattern by 3 to go from $(x,y)$ to $(x,y+1)$.\\
\ \\ 
{[}Here the number 11 was chosen since we want a function that is periodic, with the same period, in each direction. If a function is periodic horizontally on $\bb{Z}^2$ and shifts by $p$ to go from a row to the row above then it will clearly be periodic horizontally, vertically and diagonally, however it might have a smaller period. It is a simple check that to have the same period vertically it needs a period co-prime to $p$, and for the diagonals it needs a period co-prime to both $p-1$ and $p-2$. So for $p=3$ we need a period co-prime to $2,3,4$, but also larger than $8$, since each direction needs to appear at least once, and the smallest such number is $11$.{]}\\
\ \\
The important property of this function is that for any $x \in \{\text{N, NE, E, SE, S, SW, W, NW}\}$ and for any direction, if we look at 11 points in a row $v_1,\ldots ,v_{11}$ then $1 \leq |\{v_i\,:\,f(v_i) = x\}| \leq 2$, that is, the number of points assigned to each direction is between 1 and 2.  Breaker's strategy can now be described as follows: at time $2t$ Breaker looks at the $2t-1$ points Maker claimed on his turn $v_1, \ldots ,v_{2t-1}$ and for each $v_i$ claims the next available point in the direction $f(v_i)$ (any further points are claimed arbitrarily).\\
\ \\
Suppose that Maker wins at time $2t+1$. We consider the $n$ points in the winning line $L=\{v_1, \ldots ,v_n\}$ just before Breaker moves at time $2t$. Without loss of generality we will assume $L$ is in the East-West direction (the other cases are treated similarly). If Maker has claimed any point $v_i \in L$ such that $f(v_i)=$ E then Maker must also have claimed $v_j$ for all $j>i$ since otherwise Breaker will claim one of then at time $2t$. Similarly if Maker has claimed any point $v_i$ such that $f(v_i)=$ W then Maker must also have claimed $v_j$ for all $j<i$. Therefore if Maker has 3 points $\{v_{i_1}, v_{i_2}, v_{i_3}\}$, $i_1 < i_2 < i_3$, such that $f(v_{i_j})=$ E for all $j$, then $|i_3 - i_1| > 11$ and hence there is some point $v_k$, $i_1<k<i_3$, such that $f(v_k)=$W. By the preceding comment Maker must then already have claimed $v_j$ for all $j > i_1$ and also for all $j<k$ and hence Maker must already have claimed the whole line at time $2t-1$, contradicting our initial assumption.\\
\ \\
Therefore Maker can only claim at most $4$ of the points $v_i$ such that $f(v_i) \in \{\text{E,W}\}$ before time $2t+1$. Of the $n$ points in L at least $\frac{2}{11}(n-10) \geq \frac{2}{11}n - 2$ of the $v_i$ satisfy $f(v_i)\in \{\text{E,W}\}$, therefore at least $\frac{2}{11}n - 6$ points in $L$ must be unclaimed at time $2t+1$. So $T_n = 2t+1 \geq \frac{2}{11}n - 6$.
\end{proof}
\ \\
The constant $\frac{2}{11}$ could be improved by picking a larger prime instead of 11, and the same proof would show that $T_n \geq (\frac{1}{4}-\epsilon) n - C(\epsilon)$, where $C(\epsilon)$ is some constant depending on $\epsilon$. We also mention for interest that the same strategy also gives a proof that the usual Maker-Breaker $n$-in-a-row game is a Breaker win for sufficiently large $n$. In fact it shows that the Maker-Breaker $(n,k)$ game is a Breaker win for $n\geq \frac{11}{2}k + 30$ for any fixed $k$, although this is clearly not optimal in the case $k=1$.

\section{Minimization of lines}
In this section, for each $k \in [0,1)$, we give a strategy for Breaker such that Maker cannot win before time $kn$, for large enough $n$, and hence show that $T_n \geq (1-o(1))n$. When we consider the board position ``at time $t$" we mean just prior to the turn where $t$ points are claimed. We denote by $[x,y]$ the interval $\{ z \in \bb{Z} \,:\, x \leq z \leq y\}$ and by $[n]$ the interval $[1,n]$.\\
\ \\
Our plan is as follows: we will start by covering $\bb{Z}^2$ with a family $\cc{F}$ of straight lines of length $2n$ such that any winning set is contained in a member of our family and such that every $v \in \bb{Z}^2$ is in 8 lines. Given a line of length $2n$ with some points claimed but no winning set fully claimed, Breaker can place 2 more points inside that line such that no winning set can be fully claimed by Maker. So at time $2t$ Breaker can spoil $t$ of these lines. Indeed given such a line, without loss of generality $[2n]$, let $x$ be the largest point in $[1,n]$ which is either blue or unclaimed. Similarly let $y$ be the smallest point in $[n+1,2n]$ which is either blue or unclaimed. Note that, since Maker does not have a winning set in $[1,2n]$, it follows that $|x-y| < n$. After Breaker claims $x$ and $y$ then Maker can no longer fully claim a winning set in $[2n]$. After Breaker has played in such a way in $F \in \cc{F}$ we call $F$ \emph{bad}; otherwise $F$ is \emph{good}. We aim to show that if Breaker plays so as to minimise the largest number of points Maker has in any good $F$ then Maker cannot win at time $kn$ for any $0 \leq k < 1$.
\begin{theorem}
$T_n \geq (1-o(1))n$.
\end{theorem}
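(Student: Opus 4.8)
The plan is to realise the strategy sketched above and analyse it with a single counting inequality. On his turn at time $2t$, Breaker uses his $2t$ moves to turn bad the $t$ good lines of $\cc{F}$ that currently contain the most red points (ties broken arbitrarily); recall that two moves suffice to spoil a line and that a bad line stays bad. Write $M_t$ for the largest number of red points in a good line of $\cc{F}$ just after Breaker's move at time $2t$. The goal reduces to showing: for each fixed $k<1$ and all sufficiently large $n$, $M_t < (1-k)n$ for every $t \le kn/2$.

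First I would record why this suffices. If Maker completes a winning set $W$ at time $2t+1$, then just before that move $W$ contains no blue point and at most $2t+1$ unclaimed points, hence at least $n-(2t+1)$ red points. Now $W$ lies in some $F\in\cc{F}$; if $F$ were bad then $W$ would contain a blue point, so $F$ is good, and therefore $M_t \ge n-(2t+1) \ge (1-k)n$ whenever $2t+1\le kn$. Thus the displayed bound forbids Maker from winning before time $kn$, and letting $k\to 1$ gives $T_n\ge(1-o(1))n$.

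The counting input is that after Maker's move at time $2t-1$ he has claimed exactly $1+3+\cdots+(2t-1)=t^2$ points, each lying in $8$ members of $\cc{F}$, so $\sum_{F\in\cc{F}}|F\cap\text{red}| = 8t^2$ and hence at most $8t^2/c$ lines of $\cc{F}$ contain at least $c$ red points. I would combine this with the following feature of the strategy: suppose a good line $F$ contains at least $c$ red points after time $2t$, and let $2\sigma-1$ be the first time it held at least $c$ red points; then for each $r$ with $\sigma\le r\le t$ the line $F$ survives Breaker's move at time $2r$, so it is not among the $r$ lines spoiled then, whence all $r$ of those lines hold at least $c$ red points. These spoiled lines are distinct, so at least $1+\sum_{r=\sigma}^{t}r$ lines of $\cc{F}$ hold at least $c$ red points by time $2t$; with the counting bound this forces $1+\sum_{r=\sigma}^{t}r\le 8t^2/c$, and the same argument applied at time $2\sigma$ gives $\sigma>c/8$. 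The remaining ingredient is that Maker can increase the red-count of any one line by at most the number of points he claims that turn, so he cannot cheaply conjure up many lines of large red-count, and a line that Breaker has kept sparse can only be forced up to $(1-k)n$ over a handful of turns. Feeding this into the estimates in layers — bounding $M_t$ first by a constant multiple of $(1-k)n$, then refining, the number of lines Breaker must spoil shrinking at each layer until his per-turn capacity visibly dominates it — yields $M_t = o(n)$, in particular $M_t < (1-k)n$ for $t\le kn/2$ once $n$ is large.

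I expect the quantitative bound on $M_t$ to be the crux. The soft part — that Breaker can eventually spoil every line in which red is dense — is immediate from the counting bound; the delicate part is the timing, i.e. ruling out that Maker completes a winning set in a line during the window before Breaker gets round to spoiling it. This is most dangerous late in the game, when Maker claims many points at once and could in principle push several sparse lines toward $(1-k)n$ in parallel; there one uses that each such push costs him on the order of $(1-k)n$ points, so only boundedly many are possible per turn, while Breaker's spoiling capacity at those times is already linear in $n$. Balancing these counts, with the layering done in terms of $k$, is where the real work lies.
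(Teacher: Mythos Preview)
Your proposal is essentially the paper's approach: the same covering family $\cc{F}$, the same greedy spoiling strategy for Breaker, the same reduction via $M_t$, and the same $8t^2/c$ counting bound; you also correctly locate the crux in the layered iteration and describe its mechanism (Maker can push only boundedly many lines up by $\epsilon n$ per turn, while Breaker spoils linearly many).

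One remark on your intermediate step. The survival inequality $1+\sum_{r=\sigma}^{t} r \le 8t^2/c$ is correct, but it does not by itself replace the layering: $\sigma$ can equal $t$ (for $k\ge 1/2$ Maker can place $(1-k)n$ points into a fresh line in a single move), in which case the inequality only says $c\lesssim 8t$, and iterating it on the single witnessing line $F$ gives no control over $M_{\sigma-1}$. The paper does not go through a single line's history; it works directly with the counts $|\cc{A}_r^t|$ of good lines holding at least $r$ red points at time $t$, and steps backwards two turns at a time. From $|\cc{A}_{(1-k)n}^{kn}|\ge 1$ one gets $|\cc{A}_{(1-k)n}^{kn-1}|\ge kn/4$ (else Breaker would have spoiled them all), and then $|\cc{A}_{(1-k-\epsilon)n}^{kn-2}|\ge kn/4-8k/\epsilon$ since Maker's $kn$ points can raise at most $8k/\epsilon$ lines by $\epsilon n$. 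Iterating $C=32/(k(1-k))$ times with $\epsilon=1/\log n$ yields more than $4n/(1-k)$ lines each with at least $(1-k)n/2$ red points, contradicting the $8(kn/2)^2$ budget. This is exactly the ``layering in terms of $k$'' you anticipate; your framework and diagnosis are right, only the bookkeeping variable should be $|\cc{A}_r^t|$ rather than the first-passage time of one line.
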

\begin{proof}
To define the family $\cc{F}$ we let
$$F_{i,j} = \{(x,i)\,:\,jn \leq x \leq (j+2)n-1\}$$
$$G_{i,j} = \{(i,y)\,:\,jn \leq y \leq (j+2)n-1\}$$
$$H_{i,j} = \{(i+k,k)\,:\,jn \leq k \leq (j+2)n-1\}$$
$$I_{i,j} = \{(i+k,-k)\,:\,jn \leq k \leq (j+2)n-1\}.$$
Then we let
$$\cc{F} = \bigcup_{i,j \in \bb{N}} F_{i,j} \cup \bigcup_{i,j \in \bb{N}} G_{i,j} \cup \bigcup_{i,j \in \bb{N}} H_{i,j} \cup \bigcup_{i,j \in \bb{N}} I_{i,j}. $$
Note that every point $v \in \bb{Z}^2$ is in 8 members of this family. For a given play of the game we define 
$$\cc{A}_r^t = \{F \in \cc{F}\,:\, F \text{ is good and the number of red points in } F \text{ at time } t \text{ is at least } r\}.$$
If Maker were to win at time $kn$ then we must have $|\cc{A}_{(1-k)n}^{kn}| \geq 1$. However if Breaker were to play to minimise the number of points Maker has in any good $F$ then 
$$|\cc{A}_{(1-k)n}^{kn-1}| \geq \frac{kn-1}{2} + 1 \geq \frac{kn}{4}$$
since otherwise Breaker will spoil all such sets. Now for a small $\epsilon>0$ we claim that
$$|\cc{A}_{(1-k - \epsilon)n}^{kn-2}| \geq \frac{kn}{4} - \frac{8k}{\epsilon}.$$
Indeed since each point is in  8 of the $F \in \cc{F}$ then by claiming $kn \geq kn-2$ points Maker can only claim $\epsilon n$ points in at most $8\frac{kn}{\epsilon n} = \frac{8k}{\epsilon}$ sets. Our aim is to iterate this argument to reach a contradiction that Maker must have claimed too many points. Breaker's strategy will be as follows: at time $2t$ he picks the $t$ good $F \in \cc{F}$ which have the most red points in them and spoils them (any further points are claimed arbitrarily). We define $ L_t = \max_{r} \{ |\cc{A}_r^t| \neq 0 \}$, and first show that $L_t$ can only grow linearly in $t$. Indeed if $L_{2t+1} = k$ then we see that $|\cc{A}_{k+17}^{2t+2}| \leq \frac{8}{17}(2t+1) \leq  t+1$, and so $L_{2t+3} \leq k +16$, since Breaker will spoil $t+1$ of the $F \in \cc{F}$ on his turn. Therefore for $t \leq \frac{n}{100}$, $L_{t} \leq \frac{16n}{100}$ and so Maker cannot win before time $\frac{n}{100}$.\\
\ \\
Now given $k \in (\frac{1}{100},1)$ we claim, for large enough $n$, $\cc{A}_{(1-k)n}^{kn} = \phi$ and so Maker cannot win at time $kn$. For ease of presentation we assume that $kn$ is an integer and odd and so Maker does indeed move at time $kn$. Suppose for a contradiction that $L_{kn} \geq (1-k)n$ and let $\epsilon = \frac{1}{\log{n}}$. We claim that for all $C\leq \frac{32}{k(1-k)}$

$$|\cc{A}_{(1-k-C\epsilon)n}^{kn-2C}| > C \frac{kn}{4} - C \frac{8k}{\epsilon}.$$

The claim holds for $C=0$, since $L_{kn} \geq (1-k)n$. If the claim holds for a given value of $C \leq \frac{32}{k(1-k)}$ then we must have

$$|\cc{A}_{(1-k-C\epsilon)n}^{kn-2C-1}| > (C+1) \frac{kn}{4} - C \frac{8k}{\epsilon}$$

since Breaker will spoil $\frac{kn-2C-1}{2} \geq \frac{kn}{4}$ of the $F \in \cc{F}$ with his turn. And so

$$|\cc{A}_{(1-k-(C+1)\epsilon)n}^{kn-2C-2}| > (C+1) \frac{kn}{4} - (C+1) \frac{8k}{\epsilon}$$

since if the inequality did not hold then at time $kn - 2(C+1)$ Maker must claim at least $\epsilon n$ points in at least $\frac{8k}{\epsilon}$ of the $F \in \cc{F}$, but this would require at least $\frac{8k}{\epsilon} \epsilon n \frac{1}{8} = kn$ points. We conclude that, with $C= \frac{32}{k(1-k)}$, 
$$|\cc{A}_{\frac{(1-k)n}{2}}^{kn-2C}| \geq |\cc{A}_{(1-k-\frac{C}{\log{n}})n}^{kn-2C}| > \frac{8n}{(1-k)} - \frac{32}{k(1-k)}8k\log{n} \geq \frac{4n}{(1-k)}.$$
But now to claim at least $\frac{(1-k)n}{2}$ red points in at least $\frac{4n}{(1-k)}$ of the $F \in \cc{F}$ requires at least $\frac{(1-k)n}{2}\frac{4n}{(1-k)}\frac{1}{8} = \frac{n^2}{4}$ points. However by time $kn-2C \leq kn$ Maker has claimed at most $\left( \frac{kn}{2} \right)^2 < \frac{n^2}{4}$ points.
\end{proof}
We mention that the same strategy would give $T_n \geq (1-o(1))n$ even if at time $2t$ Maker claimed $100t$ points and at time $2t+1$ Breaker claimed $0.001t$ points. Indeed whilst Breaker's strategy in Section 1 depended on the fact that $f(2t) \geq f(2t-1)$, this strategy only uses the fact that both $f(2t)$ and $f(2t+1)$ are linear in $t$. Hence same strategy will show that the $(n,f)$ game has $T_n \geq (1-o(1))n$ as long as $f(2t)= at+b$ and $f(2t+1)=ct+d$ for $a,b,c,d \in \bb{R}^+$. \\
\ \\
As we mentioned in the introduction, since the analysis of these strategies would be unchanged, up to a small constant, if Breaker were to play first, both of these strategies can be used by the losing player in the $(n,\iota)$ game and so the lower bounds on $T_n$ are also applicable to the $(n,\iota)$ game. Finally we mention an open problem. Since the $(n,\iota)$ game cannot end in a draw, either the first or second player will have a winning strategy, Croft also asked: \\
\ \\
{\bf Question 3: } Is the $(n,\iota)$ game a first or second player win?\\
\ \\
A small case analysis shows that player 1 wins for $n=1,3,4,6,7$ and player 2 wins for $n=2,5$. It is not clear if there is a simple formula that decides which player wins for general $n$.
\bibliography{n-in-a-row}
\bibliographystyle{plain}
\end{document}